\newtheorem{theorem}{Theorem}
\newtheorem{remark}{Remark}
\newtheorem{example}{Example}
\newtheorem{corollary}{Corollary}
\begin{document}
\title[Feedback stabilization]{Feedback stabilization of  linear and bilinear unbounded systems in Banach space}
\author{K. Ammari}
\address{UR Analysis and Control of PDE's, UR 13E64, Department of Mathematics,
Faculty of Sciences of Monastir, University of Monastir, 5019 Monastir, Tunisia}
\email{kais.ammari@fsm.rnu.tn}

\author{ S. El Alaoui}
\address{Department of  mathematics \& informatics, ENS. Univesity of Sidi Mohamed Ben Abdellah. Fes, Morocco}
\email{elalaoui.sfe18@gmail.com}

\author{M. Ouzahra}
\address{Department of  mathematics \& informatics, ENS. Univesity of Sidi Mohamed Ben Abdellah. Fes, Morocco}
\email{mohamed.ouzahra@usmba.ac.ma}

\begin{abstract}
We consider linear control systems of the form \\ $\dot{y}(t)=Ay(t)-\mu B C y(t)$ where $\mu$ is a positive real parameter,
$A$ is the state operator and generates a linear $C_0-$semigroup of contractions $S(t) $ on a Banach space $X$,
$B$ and $C$ are respectively  the operators of control  and  observability, which are defined in appropriate spaces in which they are unbounded in some sense. We aim to show the exponential stability of the above system under sufficient conditions which are expressed in term of admissibility and observability properties. The uniform exponential stabilization using bilinear control is considered as well. Applications to transport and heat equations are also provided.
\end{abstract}

\subjclass[2010]{93D15}
\keywords{Exponential stabilization; unbounded control operator; linear system;  bilinear system}

\maketitle

\tableofcontents

\section{Introduction}

Let us consider the following linear control system
\begin{equation}\label{SL}
    \dot{y}(t)=Ay(t) +  B u(t),\; t\ge 0,\; \;\\
    \\
        y(0)=y_0\in X
\end{equation}
augmented with the output $z(t)=Cy(t),\; t\ge 0, $ where $X$ and $U$ are two Banach spaces  representing respectively, the state and observation$/$control space,  $A: D(A)\subset X \rightarrow X $ is the system operator, which  generates a $C_0-$semigroup of contractions $S(t)$ on $X$, the space $X$ is endowed with  norm $\|\cdot\|_X$, and let $X_{-1}$ denote the completion of $X$ w.r.t to the  norm $\|x\|_{-1}:=\|(A-\eta I)^{-1}x\|_X,  \; x\in X$ for some (or equivalently all) $\eta $ in resolvent set  $ \rho(A)$ of $A$, $ B\in {\mathcal L}(U,X_{-1})$ is the control operator and $C\in {\mathcal L}(W,U)$ is the observation operator, where $W$ is a Banach space such that the injections $X_1 \hookrightarrow W \hookrightarrow X$ are continuous ($X_1$ being  the space $D(A)$ equipped  with the graph norm). Then, closing the system (\ref{SL}) with the control $u(t)=- \mu Cy(t), $ ($\mu >0$ is the gain control) one obtains the following  Cauchy problem
\begin{equation}\label{SF}
    \dot{y}(t)=Ay(t) - \mu B Cy(t),\; \; t>0, \;  \;  y(0)=y_0\in X,
\end{equation}
which is well-posed in $X$ whenever  $A-\mu B C$  is a generator of a $C_0$-semigroup on $X$ (cf. \cite{eng}, Section II.6).

\medskip

We further consider the bilinear system
\begin{equation}\label{S}
    \dot{y}(t)=Ay(t) + v(t) \mathcal{B} y(t),\; \\
    \\
    y(0)=y_0\in X\cdot
\end{equation}
The well-posedeness of the  systems like  (\ref{SL}) and (\ref{S}) has been studied in many works using different approaches (see e.g. \cite{adler,ber09,bou05,had05,mar,staf05,weiss94}).\\
In several practical situation, the modeling gives rise to unbounded control systems of form (\ref{SL}) or (\ref{S}), where the closed loop operator is of type   Weiss-Staffans, Miyadera-Voigt or Desch-Schappacher (see e.g.\cite{Kai-Nec,grei,luo,oze,ris,staf05,  voi, weis89}). This fact often occurs  when the control is exercised through the boundary or a point for systems governed by partial differential equations.

\medskip

The problem of feedback  stabilization of some classes of linear and nonlinear systems  has been investigated in case of bounded and unbounded control operators in \cite{FK,ambch1,ambch2,Kai-Nec,amo,amtr,KM1,KM2}. Feedback  stabilization of the  bilinear system  (\ref{S})  has been investigated in the case of a bounded control operator  by numerous authors using various control approaches, such as quadratic control laws, sliding mode control, piecewise constant feedback and optimal control laws (see \cite{bal,  ouz20} and the references therein). Recently, the question of  stabilization of bilinear systems with unbounded control operator has been treated in \cite{ber10,elay12,aya18,ouz17}. In \cite{ber10}, the author considered the case where $A$ is self-adjoint and $B$ is positive self-adjoint and bounded from some subspace $V$ of $H$ to its dual space $V^{\prime},$ then he established the weak and strong stabilizability of the system (\ref{S}) for all $y_0\in D(A)$ using nonlinear control.  Moreover, in \cite{elay12,aya18} it has been supposed that the linear operator $B$ is  relatively  bounded  w.r.t $A$ from  $H$ to an extension $X$ of $H$ with a continuous embedding $H \hookrightarrow
X$. Then, under an exact observability condition, it has been shown that (\ref{S}) is strongly
stabilizable, and a polynomial decay estimate of the stabilized state has been provided in the case of positive self-adjoint control operator.  In \cite{ouz17}, the exponential stabilizability of bilinear systems has been considered for Miyadera's  control operator, and the stabilizing control is a switching one which leads to a closed-loop system like (\ref{SF}) evolving in a reflexive state space. More recently, the case of nonreflexive state space was considered in the context of bounded control operator \cite{ouz20}. In this paper, we  deal with a wide class of linear$/$bilinear systems evolving on a nonreflexive state space with unbounded control operators, including control operators of type Weiss-Staffans, Miyadera-Voigt or Desch-Schappacher. Then we will  give  sufficient conditions for exponential stabilizability of  infinite dimensional systems that can be described by the systems (\ref{SL}) or (\ref{S}).
\medskip

The paper is organized as follows: In the second section,  we provide some tools that will be required for the stabilization problem, then state and show the main result in which we present  sufficient conditions for exponential stabilization of the linear system (\ref{SL}) with a feedback control involving the output, which leads to closed-loop operator of Weiss-Staffans's type. Next, we provide applications to bilinear system (\ref{S}) with control operator of Miyadera-Voigt or Desch-Schappacher type. Applications  to transport and heat equations are presented as well.

\section{The main results }

\subsection{Preliminary on linear semigroups}

Let us recall some notions and properties related to linear $C_0-$semigroups.

\begin{itemize}

\item
The duality pairing between the   space $X$ and
its dual $X^*$ is denoted by  $\left<\cdot,\cdot\right >,$ where $X^{*}$ is  the set of all bounded linear forms on $X$ and the pairing between $y\in X$ and $\phi\in X^{*}$ is denoted by
$\left< y,\phi\right >$.
The duality map $J$ from $X$ to $X^{*}$ is in general a multi-valued operator; i.e. for each $y\in X,$ $J(y)$ is by definition the (nonempty) set of all $\phi\in X^{*}$ such that $\left< y,\phi\right >=\Vert y\Vert_X^{2}=\Vert \phi \Vert^{2}$, where $\Vert \cdot \Vert $ denotes the norm of $X^*$ associated to $\Vert \cdot \Vert_X$.

\item
A one parameter family $S(t), \; t \ge 0,$ of bounded linear operators from a Banach space $X$ into $X$ is a semigroup  on $X$ if (i) $S(0) = I,$ (the identity operator on $X)$ and (ii) $S(t + s) = S(t)S(s) $ for every $t, s\ge 0$. A semigroup $S(t)$ of bounded linear operators on $X$ is a $C_0-$ semigroup if in addition $\displaystyle \lim_{t\to 0^+} S(t)x = x$ for every $x \in X. $ This property guarantees the continuity of the semigroup on $\mathbb{R}^+$. Moreover, one can show (see \cite{paz}, p. 4 ) that for every  $C_0-$ semigroup $S(t)$, there exist constants $\omega \ge 0$ and $M\ge 1$  such that
\begin{equation}\label{s-g}
 \|S(t)\| \le M e ^{\omega t},\; \forall t\ge 0\cdot
  \end{equation}
   If $\omega = 0$ and $M = 1,\, S(t)$ is called  a $C_0-$semigroup of contractions.\\
The linear operator $A$ defined by  $Ax = \displaystyle \lim_{t\to 0^+} \frac{S(t)x - x}{t} $ for $x\in X$ such that $\displaystyle \lim_{t\to 0^+} \frac{S(t)x - x}{t} $ exists in $X,$ is the infinitesimal generator of the $C_0-$semigroup $S(t). $ The   linear space  $D (A) := \{x\in X:\;  \displaystyle \lim_{t\to 0^+} \frac{S(t)x - x}{t} \in X \}$ is the domain of $A$.

The infinitesimal generator of a contraction $C_0-$semigroup  is  dissipative, i.e., for every $y\in D(A)$ and for all  $y^*\in J(y)$ we have  $Re \langle Ay,y^*\rangle \le 0$  (see \cite{paz}, pp. 14-15).

\item For $x \in D(A),$ we have $ Ax= \frac{d^+S(t)x}{dt}|_{t=0} $ and    $ y(t):= S(t)y_0   $ is differentiable and lies in $D(A)$ for all $t>0, $ and is the unique solution of the Cauchy problem: $\dot{y}(t)=Ay(t), t>0, \; y(0)=y_0$. Moreover, for  every $y_0\in X;\; y(t)= S(t)y_0 $ is called mild solution of this Cauchy problem.


\item If $A$ is the infinitesimal generator of a $C_0-$semigroup $S(t),$ then
$D(A)$  (the domain of $A$) is dense in $X$ and $A$ is a closed linear operator.
 Moreover, according to Hille-Yosida's Theorem (see for instance \cite{paz}, p. 20), a linear operator $A$ is the infinitesimal generator of a $C_0-$semigroup $S(t)$ satisfying (\ref{s-g}) if and only if
(i) $A$ is closed and $\mathcal{ D}(A)$ is dense in $X$, and (ii) the resolvent set $\rho(A)$ of $A$ contains the ray $(\omega,+\infty)$ and
$\|R(\lambda, A)^n\| \le \frac{ M}{(\lambda - \omega)^n} $ for $\lambda > \omega, \, n = 1, 2, ...$ In particular,   a closed operator $A$ with densely domain $D(A)$ in $X$ is the infinitesimal  generator of a $C_0-$semigroup of contractions on $X$ if and only if  the resolvent set $\rho(A)$ of $A$ contains $ \mathbb{R}^+ $ and for all $\lambda>0; \; \|\lambda R(\lambda,A)\| \le 1$ (see \cite{paz}, p. 8).

\item The $C_0-$semigroup $S(t)$  may be extended to a $C_0-$semigroup $S_{-1}(t)$ on $X_{-1},$ whose generator is the extension $A_{-1}: D(A_{-1}):=X  \subset X_{-1}  \rightarrow X_{-1}$ of $A: D(A) \subset X  \to X$ to a m$-$dissipative operator from $X$ to $X_{-1}$. In particular, we have  $A_{-1}y=Ay$ for all $y\in D(A)$ (see \cite{eng}, p. 126). Using the integral representation of the resolvent one obtains
$R(\lambda,  A_{-1})y = R(\lambda, A)y,\; \forall y\in X, $ for all $\lambda \in \rho(A_{-1}).$  Recall also that $R(\lambda,  A_{-1})y \in X,\; \forall y\in X_{-1}$ and  $R(\lambda,  A_{-1})y = R(\lambda, A)y,\; \forall y\in X, \; \forall \lambda \in \rho(A_{-1}).$
 Moreover, If $A$ is dissipative, then so is $A_{-1}$: For $z\in X,$ we have
$
\|S_{-1}(t)z\|_{-1}=\|S(t)z\|_{-1}=\|R(\eta:A)S(t)z\|_X\le \|R(\eta:A)z\|_X=\|z\|_{-1}.
$
Then by density of $X$ in $X_{-1}$, we conclude that $S_{-1}(t)$ is a contraction on $X_{-1}$.

\item We have
 $\displaystyle \sup_{\lambda\in \rho(A_{-1})}\|\lambda R(\lambda,A_{-1}) B  \|_{{\mathcal L}(U,X_{-1})} <\infty,$
where $ \rho(A_{-1})$ is the resolvent set   of $A_{-1}$.
Moreover,    by the closed graph theorem we have that $\lambda R(\lambda,A_{-1}) B \in {\mathcal L}(U, X)$. For instance  for $W=U=X_1$ and  $Range (B)\subset X, \; i.e., B\in {\mathcal L}(X_1,X)$ (which is the case of Miyadera's operators), we have for all (real) $\lambda \in   \rho(A_{-1})$ large enough,
$$\|\lambda R(\lambda,A_{-1}) B  \|_{\mathcal{L}(X_1,X)}=\|\lambda R(\lambda,A) B  \|_{\mathcal{L}(X_1,X)} \le
$$
$$
\|B\|_{{\mathcal L}(X_1,X)},\; \forall \lambda >0.$$
In fact this is also true for any admissible operator $B$ in the sense of $(h_2)$ below (see \cite{staf05}, p. 219), that is   there exists $K>0$ such that
   \begin{equation}\label{k}
   \|\lambda R(\lambda,A_{-1}) B  \|_{\mathcal{L}(U,X)}\le  K, \; \; \mbox{for all } \; \lambda \; \mbox{large enough}.
   \end{equation}

In general this property does not imply the admissibility of $B$.
 (see \cite{bou05,bou10,bou11,bou14,haak,jac03,mar,merd,weis-conj} for some discussions and partial results about this implication).

\item  Let  $\mathfrak{X} \oplus \mathfrak{X}_{-1}$ be a direct (algebraic) decomposition  in $X_{-1}$ between two subspaces $ \mathfrak{X} $ and $ \mathfrak{X}_{-1}$ such that  $ \mathfrak{X} \subset X$ and $X \cap \mathfrak{X}_{-1}=\{0\}$, and let $P_{\mathfrak{X}}$ denote the projection on $\mathfrak{X}$ according to the above decomposition.
 Moreover, if $K$ is a linear  operator such that  $Range(K)\subset\mathfrak{X} \oplus \mathfrak{X}_{-1},$ then  we set  $_{_X}\!K := P_{\mathfrak{X}} K.$ \\
Note that $_{_X}\!K$ depends on the choice of $\mathfrak{X} $ and $ \mathfrak{X}_{-1}, $ so in the sequel, we suppose that such a choice is made. In this case, we have $Range(K)=Y\oplus Z$ with $Y:=\mathfrak{X}\cap Range (K)\subset X$ and $Z:=\mathfrak{X}_{-1}\cap Range (K)$ so $Z\cap X=(0)$. Moreover, we can also write
$K=K_1+K_2$ with $Range(K_1)\subset X$ and $Range(K_2)\cap X= \left\{0\right\},$ and we have $K_1=_{_X}\!\!K.$
 \end{itemize}
\subsection{The  stabilization results}

In this part we consider the stability of the system (\ref{SF}).  The first task is to guarantee the existence and uniqueness
 of the solution. Note that  if the operator $\mu BC\in {\mathcal L} (W,X_{-1})$ is a Weiss-Staffans perturbation for $A$,  then (see e.g. \cite{adler,weis89}) the  closed loop system (\ref{SF}) is well-posed. More precisely, for small gain control $\mu>0$, the operator $(A-\mu BC)|_X$ (i.e. the part of $A_{-1}-\mu BC$ on $X$) with domain $D_\mu:=\{y\in W:\; (A_{-1}-\mu BC)y\in X\}$ generates a $C_0-$semigroup $T(t)$ on $X$ satisfying the following variation of constants formula (V.C.F)
 \begin{equation}\label{TBC}
T(t)y_0=S(t)y_0-\mu \int_0^t S_{-1}(t-s) BC T(s) y_0,\; \forall y_0\in D_\mu.
\end{equation}
Note that in general, we have $D(A)\cap D((BC)|_X)\subset D_\mu.$ Moreover, if $W\subset X_1\cup D((BC)|_X),$ then we have  $(A_{-1}-\mu BC)|_X=D(A)\cap D((BC)|_X).$


\medskip

This motivates the consideration  of the  assumptions  $(h_1)-(h_4)$ below.

\medskip

$(h_1)$ the well-posedness assumption: there exists $ \alpha_1>0$ such that  for every $\mu \in (0,\alpha_1), $  the operator $(A_{-1}-\mu BC)|_X$ with domain $D:=D((A_{-1}\cap BC)|_X)$ generates a $C_0-$semigroup $T(t)$ on $X$.

%

\medskip

Now let us consider the following assumptions for some  $T, M >0$.

\medskip

  $(h_2)$ The admissibility assumption of $ B\in {\mathcal L} (U,X_{-1})$:    $$\displaystyle \int_0^T S_{-1} (T-s) Bu(s)  ds \in X, \, \forall \, u\in  L^1(0,T;U), $$
which  implies that
$$\left\|\displaystyle \int_0^T S_{-1} (T-s) Bu(s)  ds\right\|_X \le M\|u\|_1, $$
for all $u\in L^1(0,T;U)$ (or equivalently for all $ u \in  W^{1,1}(0,t_1;U)$). This also implies that the operator  defined by
$$\mathcal{B}_T : u(\cdot)\in L^1(0,+\infty;U) \mapsto \int_0^T S_{-1} (T-s) Bu(s) ds$$
 is bounded (see  \cite{adler}).

\medskip

Note that  if $X$ is reflexive, then the admissibility assumption $(h_2)$ is equivalent to the boundedness  of $B$ (see \cite{weis89}).
\medskip

  $(h_3)$ The admissibility assumption of $ C\in {\mathcal L} (W,U)$:
  $$\int_0^T \|CS(t)y\|_U dt \le M\|y\|_X,\; \forall y\in D(A). $$

\medskip

$(h_4)$ Joint-admissibility  of $B$ and $ C$: $$\int_0^T \left\|C \int_0^r S_{-1}(r-s)Bu(s) ds  \right\|_U dr \le M\|u\|_1,\; \forall u\in L^1(0,T;U)$$
   with
  $\|u\|_1=\int_0^T \|u(\tau)\|_X d\tau.$

\medskip

In the sequel, if there is no confusion, we use  $\langle z,J(y)\rangle$ for any $ y^*\in J(y)$ instead  of $\langle z,y^*\rangle$. Also, for any functions $t\mapsto \zeta(t)$, we will write $\phi(\cdot)\in J(\zeta(\cdot))$ if $\phi(t)\in J(\zeta(t)),\, \forall
t\ge0.$

\medskip

Now, for the stabilization results we further consider the  following observation assumption.

$(h_5)$ The observability condition: for some $ \delta >0$ we have
\begin{equation}\label{obs}
\int_0^T \mathcal{R}e \langle  _{_X}\!(BC)S(t)y,  J(S(t)y) \rangle dt\ge \delta \| S(T)y\|_X^2,\; \forall y\in D(A).
\end{equation}

The  estimate (\ref{obs})  may be seen as a null-exact controllability inequality in the sense of linear systems.

\medskip

$(h_6)$  The  function $F_J: y\mapsto \{\langle  _{_X}\!(BC) y,y^*\rangle; \; y^*\in J(y)\}$ is such that
   for all $y, z \in X$, there exists $(y^*,z^*)\in J(y)\times J(z)$ such that
 $$
|\langle _{_X}\! (BC) y,y^*\rangle-\langle _{_X}\!(BC) z,z^*\rangle| \le  k_1 \left( \|y\|_{D(C)}+ \|z\|_{D(C)} \right)) \; \|y-z\|_X  +
	$$
	$$
   k_2 ( \|y\|_X + \|z\|_X) \; \|C(y-z)\|_U, \;\;\; \forall y, z\in D(A)\subset W
    $$
  for some  constants $k_i\ge 0,\; i=1,2$, where $\|y\|_{D(C)}:= \|y\|_X+\|Cy\|_U$.

\medskip

This assumption is motivated by the fact that here,  the state space is a general Banach space, i.e. without any smoothness property that evolves  the duality mapping. In particular, if the state space $X$ is smooth, so that $J$ is Lipschitz-continuous and  $X$ is reflexive (see e.g. \cite{ouz17}), then $(h_6)$ is verified under the admissibility  of $B$, as in that case the operator $BC$ will be bounded from $W$ to $X$ (see \cite{weis89}).

\medskip

Let us now state our main result.

\begin{theorem} \label{thm1}
Let  assumptions $(h_1)-(h_6)$ hold.
Then there exists $\alpha>0$ such that for any $\mu \in(0,\alpha)$, the closed-loop system (\ref{SF}) is exponentially stable.
\end{theorem}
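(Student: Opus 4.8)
The plan is to prove that the time-$T$ map $T(T)$ of the closed-loop semigroup is a strict contraction on the dense domain $D$, and then to conclude via the semigroup property. Throughout, $y_0\in D$ is fixed and $C_1,C_2,\dots$ denote constants uniform for $\mu$ in a fixed small interval. First I would extract from the admissibility hypotheses the two auxiliary estimates that drive the argument. Starting from the variation of constants formula (\ref{TBC}), writing $CT(s)y_0 = CS(s)y_0-\mu\,C\!\int_0^s S_{-1}(s-r)B\,CT(r)y_0\,dr$ and using $(h_3)$ together with the joint admissibility $(h_4)$, a short absorption argument shows that $s\mapsto CT(s)y_0$ belongs to $L^1(0,T;U)$ with $\|CT(\cdot)y_0\|_{L^1}\le M'\|y_0\|_X$ once $\mu$ is small; feeding this back into (\ref{TBC}) and applying the admissibility $(h_2)$ of $B$ yields
\begin{equation*}
\|T(t)y_0-S(t)y_0\|_X\le C_1\mu\|y_0\|_X\ \ (t\in[0,T]),\qquad \int_0^T\|C(T(s)y_0-S(s)y_0)\|_U\,ds\le C_1\mu\|y_0\|_X .
\end{equation*}
Since $T(\cdot)$ is a $C_0$-semigroup we also record $M_0:=\sup_{t\in[0,T]}\|T(t)\|<\infty$.

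The second and central step is the energy inequality. For $y_0\in D$ the trajectory $y(t)=T(t)y_0$ is differentiable with $\dot y(t)=(A_{-1}-\mu BC)y(t)\in X$. Decompose $BC=(BC)_1+(BC)_2$ along the algebraic splitting $\mathfrak X\oplus\mathfrak X_{-1}$ recalled above, so that $(BC)_1={}_{_X}\!(BC)$ has range in $X$ while $(BC)_2$ has range meeting $X$ only in $\{0\}$; in particular $A_{-1}w-\mu(BC)_2 w\in X$ for every $w\in D$. Using the dissipativity of $A_{-1}$ (approximating $w$, if necessary, by elements of $D(A)\cap D((BC)|_X)$) one gets $\mathcal{R}e\big\langle A_{-1}w-\mu(BC)_2 w,\phi\big\rangle\le 0$ for a suitable $\phi\in J(w)$, and then differentiating $\|y(\cdot)\|_X^2$ along the trajectory and integrating on $[0,T]$ gives
\begin{equation*}
\|T(T)y_0\|_X^2\;\le\;\|y_0\|_X^2-2\mu\int_0^T \mathcal{R}e\big\langle {}_{_X}\!(BC)\,T(s)y_0,\;J(T(s)y_0)\big\rangle\,ds .
\end{equation*}

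The third step transfers this integral from the perturbed flow to $S(\cdot)$. Applying $(h_6)$ with $y=T(s)y_0$, $z=S(s)y_0$ and bounding the resulting right-hand side by means of the Step-1 estimates, of $\|T(s)y_0\|_X\le M_0\|y_0\|_X$, and of $(h_3)$, one obtains
\begin{equation*}
\int_0^T \mathcal{R}e\big\langle {}_{_X}\!(BC)\,T(s)y_0,J(T(s)y_0)\big\rangle\,ds\;\ge\;\int_0^T \mathcal{R}e\big\langle {}_{_X}\!(BC)\,S(s)y_0,J(S(s)y_0)\big\rangle\,ds-C_2\mu\|y_0\|_X^2 .
\end{equation*}
The observability inequality $(h_5)$ bounds the $S$-integral below by $\delta\|S(T)y_0\|_X^2$, and the Step-1 estimate gives $\|S(T)y_0\|_X^2\ge\|T(T)y_0\|_X^2-2C_1M_0\mu\|y_0\|_X^2$. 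Combining the three steps, $(1+2\mu\delta)\|T(T)y_0\|_X^2\le(1+C_3\mu^2)\|y_0\|_X^2$, so for $\mu$ below an explicit threshold $\alpha$ we get $\|T(T)y_0\|_X\le\rho\|y_0\|_X$ with $\rho=\rho(\mu)<1$; by density of $D$ this holds for all $y_0\in X$. The semigroup property then gives $\|T(nT)\|\le\rho^{\,n}$, and combining with $\|T(r)\|\le M_0$ for $r\in[0,T]$ produces $\|T(t)\|\le (M_0/\rho)\,e^{(\ln\rho)\,t/T}$, i.e. exponential stability of (\ref{SF}).

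The delicate point is the second step. In a non-reflexive, non-smooth Banach space the duality map $J$ is multivalued and generally discontinuous, $BC$ takes values only in $X_{-1}$, and the bare expression $\big\langle(A_{-1}-\mu BC)y(s),J(y(s))\big\rangle$ need not be meaningful on its own; it is precisely the splitting $\mathfrak X\oplus\mathfrak X_{-1}$ together with the closed-loop well-posedness $(h_1)$ that lets one isolate the genuinely $X_{-1}$-valued dissipative part $A_{-1}w-\mu(BC)_2 w$, which \emph{does} land in $X$, and thereby recover the correct sign, while $(h_6)$ controls the remaining bounded part ${}_{_X}\!(BC)$. Passing from the one-sided derivative of $\|y(\cdot)\|_X^2$ to the displayed integral inequality — for instance via regularization of the norm and approximation of $y_0$ in $D(A)\cap D((BC)|_X)$ — is where the most care is needed; the remaining bookkeeping, namely keeping $C_1,C_2,C_3$ independent of $\mu$ so that the smallness threshold on $\mu$ is non-vacuous, is routine.
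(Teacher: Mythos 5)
Your proposal is correct and follows essentially the same route as the paper's proof: the same admissibility-plus-absorption estimates on $\|T(t)y_0-S(t)y_0\|_X$ and $\int_0^T\|C(T(s)y_0-S(s)y_0)\|_U\,ds$, the same energy inequality obtained by differentiating $\|y(t)\|_X^2$ and using dissipativity, the same transfer of the observability integral from $S(\cdot)$ to $T(\cdot)$ via $(h_6)$, and the same conclusion via a strict contraction of the time-$T$ map (the paper phrases this as the iterate $\|y((k+1)T)\|_X^2\le q\,\|y(kT)\|_X^2$, which is the same thing). The only differences are cosmetic, e.g.\ you compare $\|S(T)y_0\|_X^2$ and $\|T(T)y_0\|_X^2$ by a difference of squares where the paper uses $(a+b)^2\le 2a^2+2b^2$.
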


\begin{proof}
According to assumption $(h_1)$,  the operator $A_{BC}:=(A_{-1}-\mu BC)|_X$  with domain $D(A_{BC})=D(A)\cap D((BC)|_X)=D$ generates a $C_0-$semigroup $T(t)$ on $X$ for $\mu>0$ small enough (says $\mu \in (0,\alpha_1)$). Moreover, $y(t):=T(t)y_0$ is the unique mild solution of (\ref{SF}) and satisfies the following V.C.F
 \begin{equation}\label{vcf}
y(t)=S(t)y_0-\mu \int_0^t S_{-1}(t-s) BC y(s) ds,\, \forall \, y_0\in D.
\end{equation}
Let $y_0\in D$ be fixed. Then for all $t\ge 0, $ we have   $(A_{-1} -\mu BC)|_Xy (t)\in X.$ Moreover, $y(t)$ has a  weak derivative $A_{BC}y(t)=T(t)A_{BC}y_0$, which is weakly continuous and hence bounded $$|\frac{d}{dt} \langle y(t),f\rangle |\le L \| A_{BC}y_0\|_X \|f\|,\; \forall f\in X^* \;\; (L>0)$$ in any bounded time-interval. Thus $y(t)$ (and so is $\|y(t)\|_X$) is Lipschitz continuous (recall that $\|y\|_X^2= \displaystyle \sup_{f\in X^*, \|f\|\le 1 } |\langle y,f\rangle|$). It follows that $\Vert y(t)\Vert_X$ is differentiable almost everywhere and  (see \cite{kato}) for a.e $t>0$ we have
 \begin{equation}\label{dy}
\frac{d}{dt}\Vert y(t)\Vert_X^2= 2 \mathcal{R}e \langle A_{BC}y(t),J(y(t))\rangle.
\end{equation}
Here $J$ is the duality  mapping of $X$ (recall that $ A_{BC}y(t)\in X),$ which by integrating and using the dissipativeness of $A$ implies
 \begin{equation}\label{iner}
2\mu \int_s^t \mathcal{R}e  \langle BCy(\tau), J( y(\tau)) \rangle d\tau \le \|y(s)\|_X^2-\|y(t)\|_X^2,\; t\ge s\ge 0.
\end{equation}
According to  $(h_6),$ we have
$$
  \mathcal{R}e \langle _{_X}\!(BC) S(t)y_0,  J(S(t)y_0) \rangle \le
	$$
	$$
	K \left( \|S(t)y_0\|_{D(C)} + \|y(t)\|_{D(C)}  \right) \|S(t)y_0-y(t)\|_X+
	$$
	$$
  K \left(\|S(t)y_0\|_X +\|y(t)\|_X \right) \|C ( S(t)y_0-y(t))\|_U +   $$
	$$
  \mathcal{R}e \langle  BC y(t), J(y(t))\rangle
$$
where  $K=max(k_1,k_2)$, where we have used that $_{_X}\!(BC) y(t)=BC y(t),$ as $y(t)\in X, \; \forall t\ge 0.$

\medskip

From the admissibility assumption $(h_3)$, we have
$$
\int_0^T \|C S(t)y_0\|_{U} dt \le M\|y_0\|_X
$$
and for all $t\in [0,T]$, we have
$$ \|S(t)y_0-y(t)\|_X  =\mu \left\|\int_0^t S_{-1}(t-s) BC y(s) ds \right\|_X $$
$$
\le \mu \|\mathcal{B}_T (C y(\cdot)) \|_1,
$$
where  $\mathcal{B}_T$ is the  bounded operator defined by
$$\mathcal{B}_T : u(\cdot)\in L^1(0,+\infty;U) \mapsto \int_0^T S_{-1} (T-s) Bu(s) ds.$$
Hence
$$
\|S(t)y_0-y(t)\|_X \le \mu M \|Cy(\cdot)\|_1
$$
where
$\|Cy(\cdot)\|_1:=\int_0^T \|Cy(\tau)\|_U d\tau.$\\
It follows  from this and $(h_4)$ that
$$\int_0^T \left\|C ( S(t)y_0-y(t))\right\|_{U} dt =\mu \int_0^T \left\| C\int_0^t S_{-1}(t-s) BC y(s) ds \right\|_U dt$$
$$
\le \mu M \|Cy(\cdot)\|_1.
$$
Let us estimate $\|Cy(\cdot)\|_1.$ For every $t\ge 0$, we have $y(t)\in D\subset W $ and $S(t)y_0\in D(A)\subset W.$ Then, from the V.C.F, we derive
$$
\begin{array}{lll}
  \int_0^T \|Cy(\tau)\|_U d\tau &\le \int_0^T \|CS(\tau)y_0\|_U d \tau+ \mu \int_0^T \|C\int_0^t S_{-1}(t-s) BC y(s)ds  \|_U dt
&  \\
  &\le M\|y_0\|_X+ M \mu \|C y(.)\|_1&\\
  &= M\|y_0\|_X+M\mu \int_0^T \|Cy(\tau)\|_U d\tau.
 &
\end{array}
$$
Hence  for $0<\mu < \alpha_2:=\inf(\alpha_1, \frac{1}{M})$, we have
\begin{equation}\label{Cy}
\int_0^T \|Cy(\tau)\|_U d\tau \le \frac{M}{1-M \mu} \|y_0\|_X.
\end{equation}
Then $$\|y(t)\|_X \le \left(1+\frac{\mu M^2}{1-M \mu} \right) \|y_0\|_X.$$
We conclude that
$$\int_0^T \mathcal{R}e \langle   _{_X}\!(BC) S(t)y_0,J(S(t)y_0)\rangle dt \le c \mu \|y_0\|^2_X+
\int_0^T \mathcal{R}e \langle BC y(t), J(y(t)) \rangle dt
$$
for some constant $c>0$ which is independent of $y_0$.\\
This together with (\ref{obs}) gives
$$
\delta \|S(T)y_0\|_X^2 - c\mu \|y_0\|^2_X\le \int_0^T \mathcal{R}e \langle  BC y(t), J( y(t) ) \rangle dt.
$$
Then taking $y(t)$ instead of $y_0$ in the last estimate, it comes
\begin{equation}\label{*S(T)}
\delta \|S(T)y(t)\|_X^2 - c\mu \|y(t)\|^2_X \le \int_t^{t+T} \mathcal{R}e \langle BC y(s), J( y(s)) \rangle ds.
\end{equation}
From  the  variation of constants formula (\ref{vcf})  we deduce that  for all $t\ge T,$ we have
  $$ \begin{array}{lll}
       \|y(t)\|_X &\le   \|S(T)y_0\|_X+ \mu \|\int_0^t S_{-1}(t-s) BC y(s) ds\|_X& \\
      & \le \|S(T)y_0\|_X+ \mu M \|Cy(\cdot)\|_1.&
      \end{array}
$$
Then taking $y(kT)$ instead of $y_0$, it comes via (\ref{Cy})
$$ \begin{array}{lll}
     \|y(t)\|_X
 &\le & \|S(T)y(kT)\|_X+ \mu M \int_{kT}^{(k+1)T} \|Cy(s)\|_U ds
 \\
     &\le & \|S(T)y(kT)\|_X+  \frac{\mu M^2}{1-M \mu}   \|y(kT)\|_X,\; \forall t\in [kT,(k+1)T].
   \end{array}
  $$
Thus for all $k\ge0,$ we have
\begin{equation}\label{zS(T)}
  \|y((k+1)T)\|_X^2
\le 2 \|S(T)y(kT)\|_X^2 +  2 \big ( \frac{\mu M^2}{1-M \mu} \big )^2  \|y(kT)\|^2_X.
  \end{equation}
This together with (\ref{iner}) and (\ref{*S(T)}) implies
$$
\mu \delta\bigg ( \| y((k+1)T)\|_X^2 - 2 \big ( \frac{\mu M^2}{1-M \mu}  \big )^2 \|y(kT)\|^2_X \bigg ) - 2c\mu^2 \|y(kT)\|^2_X\le
$$
$$
\| y(kT)\|_X^2 -\| y((k+1)T)\|_X^2.
$$
Hence
$$
(1+\mu \delta)   \| y((k+1)T)\|_X^2 \le  \bigg (2\delta \mu ( \frac{ \mu M^2}{1-M \mu} )^2 +2c\mu^2  + 1 \bigg ) \| y(kT)\|_X^2 , \; k\ge 0,
$$
from which we derive
\begin{equation}\label{y(kT)}
\| y(kT)\|_X^2 \le q^k \| y_0\|_X^2,\; \forall k\ge 0
\end{equation}
with $q:=\frac{1+ 2\mu^2 \left (  \delta \mu \left(\frac{ M^2}{1-M \mu}\right)^2 +c  \right) }{1+2\mu \delta}, $ which lies in $(0,1)$ for $\mu\to0^+.$  Moreover, using the following  well known property of linear $C_0-$semigroups:
$$
\| y(t)\|_X\le N e^{wt }\| y_0\|_X,\; t\ge 0,\; (\mbox{for some constants}\; N, w>0),
$$
 we deduce, by taking $k=E(t/T)$, that $\|y(t)\|_X\le N \,e^{wT} \|y(kT)\|_X$ and hence by (\ref{y(kT)})
\begin{equation}\label{y(t)}
\| y(t)\|_X \le M' e^{-\sigma t} \| y_0\|_X,\; \forall t\ge 0,
\end{equation}
where $M', \sigma$ are independent of $y_0$.\\
This estimate extends to all initial data in $X$ by density of
$D$ in $X$.
\end{proof}

In the previous theorem, we have considered the case where the domain of the generator  $A_{BC}$ is $D(A)\cap D((BC)|_X).$ In the next result,  we will state  an other stabilization result which only requires that the domain of the generator  $A_{BC}$ is independent of the gain control $\mu,$ provided some further conditions are fulfilled. Let us consider the following assumption.

  \medskip
  $(h_7)$ Compatibility condition: Range $( B_\lambda) \subset W,$ for some$/$all $\lambda\in \rho(A) $ holds with $B_\lambda:=\lambda R(\lambda,A_{-1}) B$.
	
	\medskip


 Notice that under the compatibility assumption,  it comes from the
closed graph theorem and the resolvent property that  $Range ( B_\lambda C) \subset \mathcal{W}. $
 Let us define the following scalar valued function $$f_{_{BC}}(y)=\limsup_{\lambda \to +\infty} \mathcal{R}e  \langle B_\lambda Cy,J(y)\rangle.$$
 Note that by (\ref{k}), we have $ f_{_{BC}}(y) \in \mathbb{R},\; \forall y\in X.$ Moreover,  if $\overline{BC}$ is the operator defined by
 $$\overline{BC}y :=\lim_{\lambda \to +\infty}   B_\lambda Cy, \;  \forall y\in D(\overline{BC}):=\{y\in W:\; \lim_{\lambda \to +\infty}   B_\lambda Cy \;\mbox{exists in}\; X \},$$ then we have
\begin{equation}\label{fBC}
f_{_{BC}}(y)=\mathcal{R}e  \langle\overline{BC}y,J(y)\rangle,\; \forall y\in  D(\overline{BC}).
\end{equation}
If in addition  $Range(B)\subset X,$ then the relation (\ref{fBC}) holds in $X$.

 Let us consider  the following   assumptions:

\medskip

$(h_5)'$ The observability condition: for some $ \delta >0$ we have
\begin{equation}\label{obsbis}
\int_0^T f_{_{BC}}(S(t)y) dt\ge \delta \| S(T)y\|_X^2,\; \forall y\in D(A),
\end{equation}

\medskip
$(h_6)'$   for all $y, z \in X$, there exists    $(y^*,z^*)\in J(y)\times J(z)$ such that
 $$
 \begin{array}{lll}
   \left| f_{_{BC}}(y)-f_{_{BC}}(z)\right| &\le  k_1 \left( \|y\|_{D(C)}+ \|z\|_{D(C)} \right) \; \|y-z\|_X  +& \\
   \\
   &k_2 ( \|y\|_X + \|z\|_X) \; \|C(y-z)\|_U, \;\;\; \forall y, z\in D(A)\subset W&
 \end{array}
    $$
  for some  constants $k_i\ge 0,\; i=1,2$.

\medskip
From the proof of Theorem \ref{thm1} we deduce the following result.

\begin{corollary} \label{cor0}

Assume that for some $ \mu_1>0$, the domain of $A_{BC}:=(A_{-1}-\mu BC)|_X$ is independent of $\mu\in  (0,\mu_1),$ and let  assumptions  $(h_2)-(h_4)$, $(h_5)'-(h_6)'$ and $(h_7)$ hold.

Then there exists $\alpha>0$ such that for any $\mu \in(0,\alpha)$, the closed-loop system (\ref{SF}) is exponentially stable.
\end{corollary}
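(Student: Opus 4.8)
The strategy is to transcribe the proof of Theorem~\ref{thm1} essentially line by line, with the scalar functional $f_{_{BC}}$ and the hypotheses $(h_5)'$--$(h_6)'$ taking over the roles of the pairing $\mathcal{R}e\langle {}_{_X}\!(BC)\,\cdot\,,J(\cdot)\rangle$ and of $(h_5)$--$(h_6)$. The first step is to pin down the common domain. If $D_\mu=D(A_{BC})$ does not depend on $\mu\in(0,\mu_1)$, then for two such values $\mu\ne\mu'$ and any $y$ in the common domain, subtracting $(A_{-1}-\mu BC)y\in X$ from $(A_{-1}-\mu' BC)y\in X$ gives $BCy\in X$, whence $A_{-1}y=(A_{-1}-\mu BC)y+\mu BCy\in X$, i.e.\ $y\in D(A)$; combined with the inclusion $D(A)\cap D((BC)|_X)\subset D_\mu$ already noted in the text, this forces $D(A_{BC})=D(A)\cap D((BC)|_X)=:D$. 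Well-posedness (existence of the semigroup $T(t)$ on $X$ with the V.C.F.\ (\ref{vcf}) for $\mu$ small) follows from the admissibility conditions $(h_2)$--$(h_4)$ as in the paragraph preceding $(h_1)$ (Weiss--Staffans perturbation). Note for later use that for $y_0\in D$ one has $y(t):=T(t)y_0\in D\subset D(A)$ and $BC\,y(t)\in X$ for every $t\ge0$.

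Second, I would re-establish the energy dissipation inequality in terms of $f_{_{BC}}$. As in Theorem~\ref{thm1}, for $y_0\in D$ the function $t\mapsto\|y(t)\|_X^2$ is Lipschitz, hence differentiable a.e., and (\ref{dy}) gives $\frac{d}{dt}\|y(t)\|_X^2=2\mathcal{R}e\langle Ay(t),J(y(t))\rangle-2\mu\,\mathcal{R}e\langle BC\,y(t),J(y(t))\rangle$. Since $y(t)\in D=D(A)\cap D((BC)|_X)$ and $\lambda R(\lambda,A_{-1})z\to z$ in $X$ for every $z\in X$, the compatibility assumption $(h_7)$ yields $B_\lambda C\,y(t)=\lambda R(\lambda,A)\big(BC\,y(t)\big)\to BC\,y(t)$ in $X$, so $y(t)\in D(\overline{BC})$, $\overline{BC}\,y(t)=(BC)|_X\,y(t)$, and therefore by (\ref{fBC}) $f_{_{BC}}(y(t))=\mathcal{R}e\langle BC\,y(t),J(y(t))\rangle$. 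Using the dissipativeness of $A$ and integrating, one gets the analogue of (\ref{iner}),
\[
2\mu\int_s^t f_{_{BC}}(y(\tau))\,d\tau\ \le\ \|y(s)\|_X^2-\|y(t)\|_X^2,\qquad t\ge s\ge0.
\]

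Third, the comparison estimates go through unchanged. Applying $(h_6)'$ with $y=S(t)y_0$ and $z=y(t)$, integrating on $[0,T]$, and bounding $\|S(t)y_0-y(t)\|_X$, $\int_0^T\|C(S(t)y_0-y(t))\|_U\,dt$ and $\int_0^T\|Cy(\tau)\|_U\,d\tau\le\frac{M}{1-M\mu}\|y_0\|_X$ by means of $(h_2)$--$(h_4)$ and the V.C.F.\ exactly as in the proof of Theorem~\ref{thm1}, one obtains $\int_0^T f_{_{BC}}(S(t)y_0)\,dt\le c\mu\|y_0\|_X^2+\int_0^T f_{_{BC}}(y(t))\,dt$ for some $c>0$ independent of $y_0$ and $\mu$. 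Combining this with the observability inequality $(h_5)'$, passing to the intervals $[kT,(k+1)T]$, and inserting the displayed dissipation inequality together with the V.C.F.\ bound (\ref{zS(T)}) gives $(1+\mu\delta)\|y((k+1)T)\|_X^2\le\big(1+2\mu\delta(\tfrac{\mu M^2}{1-M\mu})^2+2c\mu^2\big)\|y(kT)\|_X^2$ for all $k\ge0$; for $\mu>0$ small the ratio $q$ lies in $(0,1)$, so $\|y(kT)\|_X^2\le q^k\|y_0\|_X^2$, and interpolating with the growth bound $\|y(t)\|_X\le Ne^{wt}\|y_0\|_X$ and invoking density of $D$ in $X$ yields the exponential estimate (\ref{y(t)}).

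The delicate point is the second step: $f_{_{BC}}$ is defined through a $\limsup$ of the Yosida-type regularizations $B_\lambda C$, whereas the quantity that actually controls $\frac{d}{dt}\|y(t)\|_X^2$ is the genuine duality pairing $\mathcal{R}e\langle (BC)|_X\,\cdot\,,J(\cdot)\rangle$, and the whole argument rests on identifying these two on $D$. This is precisely where one needs $(h_7)$, the identification $D=D(A)\cap D((BC)|_X)$ (not available a priori, only after the subtraction argument of the first step), the convergence $\lambda R(\lambda,A_{-1})|_X\to I$ in $X$, and a bit of care with the multivaluedness of $J$. Once this coincidence is secured, the rest is a faithful copy of the proof of Theorem~\ref{thm1}.
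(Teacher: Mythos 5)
Your argument is correct and reaches the right conclusion, but at the one step where Corollary~\ref{cor0} genuinely differs from Theorem~\ref{thm1} you use a different mechanism than the paper. Your key move is the subtraction trick: since $D_\mu$ is literally the same set for two distinct $\mu\ne\mu'$ in $(0,\mu_1)$, any $y$ in it satisfies $(\mu'-\mu)BCy=(A_{-1}-\mu'BC)y-(A_{-1}-\mu BC)y\in X$, hence $BCy\in X$ and $y\in D(A)$, so the domain collapses to $D(A)\cap D((BC)|_X)$, $f_{_{BC}}$ coincides there with $\mathcal{R}e\langle BC\,\cdot\,,J(\cdot)\rangle$ (this identification needs only $BCy\in X$ and $\lambda R(\lambda,A)z\to z$, not $(h_7)$ as you suggest), and everything reduces to a rerun of Theorem~\ref{thm1}. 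The paper never analyzes the structure of $D(A_{BC})$: for an arbitrary $y\in D(A_{BC})$ it regularizes, writing $\mathcal{R}e\langle\lambda R(\lambda,A_{-1})A_{BC}y,J(y)\rangle=\mathcal{R}e\langle\lambda R(\lambda,A_{-1})A_{-1}y,J(y)\rangle-\mu\,\mathcal{R}e\langle B_\lambda Cy,J(y)\rangle$, kills the first term using $\|\lambda R(\lambda,A)\|\le1$ and dissipativity, and lets $\lambda\to+\infty$ (using $\lambda R(\lambda,A)A_{BC}y\to A_{BC}y$, valid because $A_{BC}y\in X$) to obtain the one-sided inequality $\mathcal{R}e\langle A_{BC}y,J(y)\rangle\le-\mu f_{_{BC}}(y)$ with $f_{_{BC}}$ still a genuine $\limsup$; the rest is verbatim Theorem~\ref{thm1}. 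The trade-off: your route exposes that the $\mu$-independence hypothesis is strong enough to force $BCy\in X$ on the whole domain (and, as a useful by-product, that $y(t)\in D(A)$, which is what legitimizes applying $(h_6)'$ to the pair $(S(t)y_0,y(t))$ --- a point the paper glosses over); the paper's regularization argument is the one that would survive weakening the hypothesis (e.g.\ to a common core) and is what keeps the Desch--Schappacher application from collapsing to the Miyadera case.
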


\begin{proof}

Under the  assumptions  $(h_2)-(h_4)$ and the compatibility condition $(h_7)$, the operator $\mu BC\in {\mathcal L} (W,X_{-1})$ is a Weiss-Staffans perturbation for $A$. Accordingly (see \cite{adler}), the closed loop operator $(A_{-1}-\mu BC)|_X$ with domain $D(A_{BC})=\{y\in W:\; (A_{-1}-\mu BC)y\in X\}$ generates a $C_0-$semigroup $T(t)y_0$ on $X$ and $y(t):=T(t)y_0$ is the unique mild solution of (\ref{SL}) and satisfies the formula (\ref{TBC}) for  all $y_0\in D(A_{BC}).$

\medskip

Let $y_0\in D(A_{BC})$ be fixed. Then  we have   $(A_{-1} -\mu BC)|_Xy (t)\in X$ for all $t\ge 0,$ and $\Vert y(t)\Vert_X$ is differentiable almost everywhere and (\ref{dy}) holds for $y_0\in D(A_{BC})$.
Moreover, for every $y\in D(A_{BC}),$ we have
$$
\mathcal{R}e \langle \lambda R(\lambda,A_{-1}) A_{BC}y,J(y)\rangle= \mathcal{R}e \langle \lambda R(\lambda,A_{-1}) A_{-1} y,J(y)\rangle -\mu \mathcal{R}e \langle B_\lambda Cy,J(y)\rangle
$$
Because $A$ is dissipative, we have for all $y\in X$
$$
\mathcal{R}e \langle  R(\lambda,A_{-1}) A_{-1} y,J(y)=\mathcal{R}e \langle -y + \lambda R(\lambda,A)  y,J(y)\rangle
$$
$$
\le 0.
$$
It follows that
$$
\mathcal{R}e \langle \lambda R(\lambda,A_{-1}) A_{BC}y,J(y)\rangle \le -\mu \mathcal{R}e \langle B_\lambda Cy,J(y)\rangle.
$$
Since $y\in D(A_{BC}),$ it comes that $$
\lambda R(\lambda,A_{-1}) A_{BC}y =\lambda R(\lambda,A) A_{BC}y  \to A_{BC}y\: in \;  X, \: as \; \lambda\to +\infty.$$
Hence
$$
\mathcal{R}e \langle  A_{BC}y,J(y)\rangle\le  -\mu  \limsup_{\lambda \to +\infty} \mathcal{R}e  \langle B_\lambda Cy,J(y)\rangle.
$$
$$
=  -\mu   f_{_{BC}}(y).
$$
The remainder of the proof is exactly the same as in the proof of Theorem \ref{thm1}, which leads to the estimate (\ref{y(t)}) with constants  $M', \sigma$ which are  independent of $y_0$. Then we conclude by density of $D(A_{BC})$ in $X$.

\end{proof}

\begin{remark}\label{rem}

 From the proof of Theorem \ref{thm1} (resp. Corollary  \ref{cor0}), we can see  that  the results  remain true if we assume that $(h_5)$ (resp. $(h_5)'$ ) holds for some element of the duality set $J(S(t)y)$ provided that $(h_6)$ (resp. $(h_6)'$) holds for every $(y^*,z^*)\in J(y)\times J(z)$.
\end{remark}

\section{Applications}
In this section we will apply the result of the previous section  to the bilinear system (\ref{S}). More precisely,  we will investigate the exponential stability of (\ref{S}) under the bang-bang  feedback control  $v(t)=-\mu {\bf 1}_{\{t\ge 0;\; \mathcal{B} y(t)\ne0\}}.$ As special cases,
we will consider  Miyadera-Voigt's and Desch-Schappacher's control operators.

\subsection{Stabilization of unbounded bilinear systems}

First let us note that if $\mathcal{B}$ is decomposable according to $\mathcal{B}=BC$ with $B$ and $C$ satisfying the conditions of Theorem \ref{thm1}, then we can see that  (\ref{S}) is exponentially stabilizable by the control $v(t)=-\mu {\bf 1}_{\{t\ge 0;\; \mathcal{B} y(t)\ne0\}}.$

\medskip

 In the  following corollary, we provide a result that extends  the one  of \cite{ouz17} to the case of non reflexive state space.

\begin{corollary}\label{cor3}
Let $ \mathcal{B} \in {\mathcal L} (X_1,X)$ be such that

$
(m_1) $ there exists $M >0$ such that
$$ \int_0^T \|\mathcal{B} S(t)y\|_X dt \le M\|y\|_X,\; \forall y\in D(A),$$

$
(m_2)\; $ there exists $\delta >0$ such that
$$ \int_0^T Re \langle \mathcal{B} S(t)y,J(S(t)y)\rangle dt\ge \delta \|y\|_X^2,\; \forall y\in D(A),$$

$ (m_3)$      the  function $F_J: y\mapsto \{\langle  \mathcal{B} y,y^*\rangle; \; y^*\in J(y)\}$ is  such that
   for all $y, z \in X$, there exists    $(y^*,z^*)\in J(y)\times J(z)$ such that
 $$\begin{array}{lll}
     |\langle  \mathcal{B} y,y^*\rangle-\langle  \mathcal{B} z,z^*\rangle| &\le    k_1 \big ( \|y\|_{D(\mathcal{B})}+ \|z\|_{D(\mathcal{B})} \big) \; \|y-z\|_X  +& \\
           & k_2 ( \|y\|_X + \|z\|_X) \; \|y-z\|_{D(\mathcal{B})}, \;\;\; \forall y, z\in D(A),&
   \end{array}
$$
  for some  constants $k_i\ge 0,\; i=1,2$.

Then  there exists $\alpha>0$ such that for any $\mu \in(0,\alpha), $ the control $v(t)=-\mu {\bf 1}_{\{t\ge 0;\; \mathcal{B} y(t)\ne0\}}$ guarantees the  exponential stabilization  of (\ref{S}).
\end{corollary}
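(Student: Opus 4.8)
The strategy is to reduce Corollary \ref{cor3} to Theorem \ref{thm1} by exhibiting the factorization $\mathcal{B}=BC$ with $B=\mathcal{B}$ (viewed in ${\mathcal L}(X_1,X)\subset {\mathcal L}(X_1,X_{-1})$) and $C=\mathrm{Id}$ on $W=X_1$, $U=X$, and then checking that hypotheses $(m_1)$--$(m_3)$ together with the structure of the bang-bang control yield $(h_1)$--$(h_6)$. First I would observe that for $\mathcal{B}\in{\mathcal L}(X_1,X)$ the perturbation is of Miyadera--Voigt type, so for $\mu>0$ small the operator $A-\mu\mathcal{B}$ with domain $D(A)$ (this is exactly $D(A)\cap D((BC)|_X)$ since $C=\mathrm{Id}$ is everywhere defined and $\mathrm{Range}(B)\subset X$) generates a $C_0$-semigroup; this is $(h_1)$. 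With $Range(B)\subset X$ the admissibility $(h_2)$ holds trivially (the integral already lands in $X$ and is bounded by \eqref{k}), $(h_3)$ is precisely $(m_1)$ since $CS(t)y=S(t)\mathcal{B}$-wait, $C=\mathrm{Id}$ so $\|CS(t)y\|_U=\|S(t)y\|_X$ and one uses instead the admissibility of $B$; more carefully I would take $C=\mathcal{B}$ and $B=\mathrm{Id}_{X\to X_{-1}}$... so the right bookkeeping needs care, and I would simply set $B=\mathrm{Id}$, $C=\mathcal{B}$, $U=X$, $W=X_1$, so that $BC=\mathcal{B}$, $_{_X}\!(BC)=\mathcal{B}$ (range already in $X$), $(h_3)$ becomes $(m_1)$, and $(h_4)$ follows from $(m_1)$ applied to the semigroup together with the admissibility/boundedness of $B=\mathrm{Id}$.

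Next I would verify the observability and Lipschitz-type hypotheses. Hypothesis $(h_5)$ with the factorization above reads $\int_0^T \mathcal{R}e\langle \mathcal{B}S(t)y,J(S(t)y)\rangle dt\ge \delta\|S(T)y\|_X^2$; but $(m_2)$ gives the stronger bound with $\|y\|_X^2$ on the right, and since $S$ is a contraction $\|S(T)y\|_X\le\|y\|_X$, so $(m_2)$ implies $(h_5)$. Similarly $(h_6)$, after substituting $\|y\|_{D(C)}=\|y\|_X+\|Cy\|_U=\|y\|_X+\|\mathcal{B}y\|_X=\|y\|_{D(\mathcal{B})}$ and $\|C(y-z)\|_U=\|y-z\|_{D(\mathcal{B})}$, is word-for-word $(m_3)$. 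Thus all of $(h_1)$--$(h_6)$ hold and Theorem \ref{thm1} applies to the linear closed-loop system $\dot y=Ay-\mu\mathcal{B}y$, giving $\|y(t)\|_X\le M'e^{-\sigma t}\|y_0\|_X$.

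The remaining point — and the one genuine subtlety — is to connect the \emph{linear} feedback $u=-\mu Cy=-\mu\mathcal{B}y$ used in Theorem \ref{thm1} with the \emph{bang-bang bilinear} feedback $v(t)=-\mu\mathbf{1}_{\{t\ge0:\ \mathcal{B}y(t)\ne0\}}$ in system \eqref{S}. The key observation is that with $v$ as above the bilinear term becomes $v(t)\mathcal{B}y(t)=-\mu\mathbf{1}_{\{\mathcal{B}y(t)\ne0\}}\mathcal{B}y(t)=-\mu\mathcal{B}y(t)$ for \emph{every} $t$ (on the set where $\mathcal{B}y(t)=0$ both sides vanish, and elsewhere the indicator equals $1$). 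Hence the closed-loop bilinear equation \eqref{S} coincides identically with the closed-loop linear equation \eqref{SF}, and the two have the same mild solutions; the decay estimate for \eqref{SF} is therefore exactly the desired exponential stabilization of \eqref{S}. I expect the main obstacle to be not any deep estimate but the careful justification that this pointwise identification is legitimate at the level of mild solutions — i.e. that the Miyadera--Voigt well-posedness theory produces the same $C_0$-semigroup whether one regards the perturbation as $-\mu\mathcal{B}$ or as $v(t)\mathcal{B}$ with the discontinuous $v$ — which follows because $v(t)\mathcal{B}y(t)$ and $-\mu\mathcal{B}y(t)$ agree as elements of $X$ for all $t$, so the variation-of-constants formula \eqref{vcf} is satisfied by the same function. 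Once this identification is made, invoking Theorem \ref{thm1} closes the argument, with $\alpha$ the minimum of the thresholds coming from $(h_1)$ and from the smallness condition in the proof of Theorem \ref{thm1}.
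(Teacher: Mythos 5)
Your proposal is correct and follows essentially the same route as the paper: the same factorization $B=\mathrm{Id}:X\hookrightarrow X_{-1}$, $C=\mathcal{B}$, $U=X$, $W=X_1$ (so that $_{_X}\!(BC)=\mathcal{B}$), the same reduction of $(h_1)$--$(h_6)$ to the Miyadera--Voigt well-posedness plus $(m_1)$--$(m_3)$ (using contractivity of $S$ to pass from $\|y\|_X^2$ to $\|S(T)y\|_X^2$ in the observability inequality), and the same identification $v(t)\mathcal{B}y(t)=-\mu\mathcal{B}y(t)$ turning the bang-bang bilinear closed loop into the linear closed loop of Theorem \ref{thm1}. The only cosmetic difference is your initial mislabeling of which factor is $B$ and which is $C$, which you correct in the course of the argument.
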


\begin{proof}
Let us first observe that under the assumption of the corollary, the operator $\mu \mathcal{B} $   may be seen as a   Miyadera's perturbation of $A$, for $\mu>0$ small enough, and we have   $D((A_{-1}-\mu \mathcal{B})|_X) = D(A)$.\\
Moreover, we have $_{_X}\!\mathcal{B}=\mathcal{B}$ relatively to the choice  $\mathfrak{X}=X$ and $\mathfrak{X}_{-1}=\{0\}. $

\medskip

Let us take $B=i: X \hookrightarrow X_{-1}$ (the embedding $ X \hookrightarrow X_{-1}$) with $U=X$ and $W=X_1$, so that $\mathcal{B}=C\in \mathcal{L}(X_1,X)$.
Then, since   $Range(\mathcal{B})=Range(C)\subset X,$ the compatibility condition $(h_7)$ follows from the fact that $R(\lambda,A_{-1}) C=R(\lambda,A) C,$  while the others conditions of Theorem \ref{thm1} are clearly satisfied. Moreover, the well-posedeness follows from \cite{miy,voi} (see also \cite{eng}, p. 199) and (\cite{adler}, Theorem 18 and its remark).\\
Finally, by observing that $v(y)\mathcal{B} y=-\mu \mathcal{B} y$ with $v(y)=-\mu {\bf 1}_{(y\not\in \ker \mathcal{B})}$, we can see that the bilinear system (\ref{S}), closed with the feedback control $v(t)=-\mu {\bf 1}_{\{t\ge 0;\; \mathcal{B} y(t)\ne0\}},
$ leads to the system in closed-loop (\ref{SF}). Hence according to Theorem \ref{thm1}, we have the exponential stability for small gain control $\mu>0$.

\end{proof}

We  have the following result regarding the case of Desch-Schapacher's   control operator.

\begin{corollary} \label{cor2} Let $\mathcal{B}\in \mathcal{L} (X,X_{-1})$ be such that for some $\mu_1, T, \beta >0$ we have

$(ds)_1\; $ the domain of $A_{BC}:=(A_{-1}-\mu BC)|_X$ is independent of $\mu\in  (0,\mu_1),$

$(ds)_2\; $   for all $u\in  L^1(0,T;X),$ we have $\displaystyle \int_0^T S_{-1} (T-s) \mathcal{B} u(s) ds\in X, $

$(ds)_3 \; $ there exists  $\delta >0$ such that $$\int_0^T \limsup_{\lambda \to +\infty} Re \langle   \mathcal{B}_\lambda  S(t)y,  J( S(t)y)\rangle dt\ge \delta \|S(T)y\|_X^2,\; \forall y\in X,$$

$ (ds)_4 \; $         for all $y, z \in X$, there exists    $(y^*,z^*)\in J(y)\times J(z)$ such that
 $$ |\limsup_{\lambda \to +\infty} \langle  \mathcal{B}_\lambda y,y^*\rangle- \limsup_{\lambda \to +\infty} \langle  \mathcal{B}_\lambda z,z^*\rangle| \le  k \big ( \|y\|_X+ \|z\|_X \big) \; \|y-z\|_X, \; \forall y, z\in X,$$
  for some  constant $k\ge 0$.

Then  there is $\alpha>0$ such that for any $\mu \in(0,\alpha), $ the control $v(t)=-\mu {\bf 1}_{\{t\ge 0;\; \mathcal{B} y(t)\ne0\}}
$ guarantees the  exponential stabilization  of (\ref{S}).

\end{corollary}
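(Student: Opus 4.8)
The plan is to deduce this from Corollary \ref{cor0} through the trivial factorization of $\mathcal{B}$. Take $U=X$, $W=X$, the control operator $B:=\mathcal{B}\in\mathcal{L}(X,X_{-1})$, and the observation operator $C:=i$, the identity map of $X$; then $\mathcal{B}=BC$, $C\in\mathcal{L}(W,U)$, and the chain $X_1\hookrightarrow W\hookrightarrow X$ holds trivially. Since the bang-bang law $v(y)=-\mu\,{\bf 1}_{\{\mathcal{B}y\neq 0\}}$ obeys $v(y)\,\mathcal{B}y=-\mu\,\mathcal{B}y$ for every $y$ (the equality being trivial on $\ker\mathcal{B}$), the bilinear system (\ref{S}) closed by this feedback is literally the linear closed-loop system (\ref{SF}) for the operator $A_{-1}-\mu BC=A_{-1}-\mu\mathcal{B}$; concretely, a trajectory of the feedback system is obtained by solving (\ref{SF}) and then reading off $v(t):=-\mu\,{\bf 1}_{\{\mathcal{B}y(t)\neq 0\}}$. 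It therefore suffices to verify the hypotheses of Corollary \ref{cor0} for this pair $(B,C)$.

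These reduce, one by one, to $(ds)_1$--$(ds)_4$, using the simplifications produced by $C=i$. The $\mu$-independence on $(0,\mu_1)$ of $D(A_{BC})=\{y\in X:\ (A_{-1}-\mu\mathcal{B})y\in X\}$ is $(ds)_1$. The admissibility assumption $(h_2)$ for $B=\mathcal{B}$ is exactly $(ds)_2$; with the closed-graph remark following $(h_2)$ it yields the bound (\ref{k}), so $\mathcal{B}_\lambda=\lambda R(\lambda,A_{-1})\mathcal{B}$ is uniformly bounded into $X$ and $f_{_{BC}}$ is a well-defined real-valued function. Assumption $(h_3)$ is automatic because $C=i$ and $S(t)$ is a contraction: $\int_0^T\|CS(t)y\|_U\,dt\le T\|y\|_X$. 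For $(h_4)$, a shift of the time variable rewrites $\int_0^r S_{-1}(r-s)\mathcal{B}u(s)\,ds$ (for $r\le T$) as $\int_0^T S_{-1}(T-s)\mathcal{B}\tilde u(s)\,ds$ with $\|\tilde u\|_1=\|u\|_{L^1(0,r)}\le\|u\|_1$, so $(ds)_2$ and closed graph give $\|\int_0^r S_{-1}(r-s)\mathcal{B}u(s)\,ds\|_X\le M\|u\|_1$, and integrating in $r$ over $[0,T]$ (with $C=i$) yields $(h_4)$. The compatibility condition $(h_7)$ holds automatically, since $R(\lambda,A_{-1})$ maps $X_{-1}$ into $X$, whence $Range(B_\lambda)=Range(\lambda R(\lambda,A_{-1})\mathcal{B})\subset X=W$; in particular no projection onto a subspace of $X$ intervenes and $B_\lambda Cy=\mathcal{B}_\lambda y$. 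Finally, as $\|y\|_{D(C)}=2\|y\|_X$ and $\|C(y-z)\|_U=\|y-z\|_X$, the observability inequality $(h_5)'$ becomes $(ds)_3$ and the Lipschitz-type estimate $(h_6)'$ follows from $(ds)_4$ (the two constants absorbed into $k$), both being assumed on all of $X$, hence a fortiori on $D(A)$. Corollary \ref{cor0} then provides $\alpha>0$ such that (\ref{SF}) --- equivalently the bilinear system under the stated bang-bang feedback --- is exponentially stable for all $\mu\in(0,\alpha)$.

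I expect the only point requiring real care, rather than bookkeeping, to be well-posedness, which underpins the very applicability of Corollary \ref{cor0}: one must know that, under $(ds)_1$, $(ds)_2$ and $(h_7)$, the perturbation $\mu BC=\mu\mathcal{B}\in\mathcal{L}(W,X_{-1})$ is, for $\mu$ small enough, an admissible Weiss-Staffans (indeed Desch-Schappacher) perturbation of $A$, so that $(A_{-1}-\mu\mathcal{B})|_X$ generates a $C_0$-semigroup on the fixed domain and satisfies the variation-of-constants formula (\ref{TBC}); this is exactly what the perturbation results cited for Corollary \ref{cor0} (e.g.\ \cite{adler,weis89}) supply, and it is also what makes the feedback system well defined. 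Everything else --- in particular the time-shift argument behind $(h_4)$ and the identity $v(y)\mathcal{B}y=-\mu\mathcal{B}y$ which turns the bilinear problem into (\ref{SF}) --- is routine.
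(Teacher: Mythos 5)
Your proposal is correct and follows exactly the paper's route: the paper's own proof is the single line ``this follows from Corollary \ref{cor0} by taking $U=W:=X$, $C:=I_X$ and $B:=\mathcal{B}\in\mathcal{L}(X,X_{-1})$'', and your argument is that same reduction with the verification of $(h_2)$--$(h_4)$, $(h_5)'$--$(h_6)'$ and $(h_7)$ spelled out (the time-shift argument for $(h_4)$ and the identity $v(y)\mathcal{B}y=-\mu\mathcal{B}y$ being the only details the paper leaves implicit).
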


\begin{proof}
This follows from Corollary \ref{cor0} by taking   $U=W:=X$, $C:=I_{X}$ and $B:=\mathcal{B}\in \mathcal{L}(X,X_{-1})$.

\end{proof}

\subsection{Examples}

\begin{example}

Let    us consider the following system
\begin{equation}\label{E1}
\left\{
\begin{array}{ll}
  y_t(\cdot,t) =   y_{x}(\cdot,t)-\mu  \mathbf{B}y(\cdot,t),& \mbox{in} \; (0,1)\times (0,+\infty), \\
  y(1,t)=0, & \mbox{in} \, (0,+\infty),\\
  y(\cdot,0)=  y_0\in L^1(0,1), & \mbox{in} \;(0,1).
\end{array}
\right.
\end{equation}
Here,  $X=L^{1} (0,1)  $ and  the duality map is given  for all $y\in X$ by
$$J(y)=\{\xi\in L^\infty(\Omega) : \; \xi(x)\in  \,\, \mbox{sign}(y(x)) \cdot \|y\|\},$$
 where  the sign function is defined by\\
$  sign (s)=
 \left\{
   \begin{array}{cc}
     1, & s>0, \\
    I, & s=0, \\
     -1, & s<0.
   \end{array}
 \right. $  with $I=[-1,1]$.\\
The state operator is defined  by $ Ay= y'$ with domain $D(A)=W_0^{1,1}(0,1)=\{y\in W^{1,1}(0,1):\; y(1)=0 \}$ and generates the semigroup $ S(t) $  defined  for all $y\in L^1(\Omega)$ by
$$
\big ( S(t) y \big) (\xi)=\left\{
  \begin{array}{ll}
    y(\xi+t), & \mbox{if} \; \xi+t\le 1 \\
\\
    0, & \mbox{else.}
  \end{array}
\right.
$$
 Let $\phi = \alpha \delta_1 \in  \big ( W^{1,1}(0,1) \big )'$  where $\delta_1$ is the Dirac point evaluation in $1$ and $\alpha\in \mathbb{R},$ and let us define the control operator for $y\in  W^{1,1}(0,1)$ by  $\mathbf{B}y=y+\phi (y) A_{-1}a,$ where $a(x)= 1, \; a.e. \; x\in (0,1).$
\\
Let us consider the unbounded part   of $\mathcal{B}$, which is defined by  $ y\mapsto  \phi (y) A_{-1}a, $ and  which may be written in the form $BC:  W^{1,1}(0,1) \rightarrow X_{-1}$, where  $B\in\mathcal{L}(\mathbb{C},X_{{-1}})$  is defined by $Bq=qA_{-1}a,\; q\in U:=\mathbb{C}$ and $C\in\mathcal{L}(W,\mathbb{C})$ is defined by  $ Cy=\phi (y)=\alpha y(1),\: \forall y\in W:=W^{1,1}(0,1).$

\medskip

The admissibility properties of $B$ and $C$ as well as the compatibility condition can be  checked for $T=1$ (see   \cite{adler}), which implies   the well-posedness of the above system (for $\mu>0$ small enough). Let $\mu\in (0,\mu_1)\subset (0,1)$ with $0<\mu_1<\frac{1}{\alpha}$ for which the well-posedness is guaranteed. Then
$$
\begin{array}{lll}
  y\in D((BC)|_{X}) &\Leftrightarrow& \phi(y)A_{-1}a \in X \\
&\Leftrightarrow & \phi(y)=0 \;\; (\mbox{because} \;\; a\not\in D(A))
  \\
& \Leftrightarrow& \mathcal{B} y=y.
\end{array}
$$
Here, one can take  $\mathfrak{X}=W^{1,1}(0,1)$ and $\mathfrak{X}_{-1}=span(A_{-1}a).$ Then we have $_{_X}\!\mathcal{B}y=y,\; \forall y\in W^{1,1}(0,1)$ and so  $\langle _{_X}\!\mathcal{B}y,J(y) \rangle = \|y\|^2,\; \forall y\in W.$  Thus the assumptions  $(h_5) \; \&\; (h_6)$ clearly  hold.\\
Moreover, we have
$$
\begin{array}{lll}
  y\in D(( A_{-1} -\mu \mathcal{B})|_X) &\Rightarrow & A_{-1} ( y-\mu \phi(y) a)  \in X  \\
  &\Rightarrow& y-\mu  \phi(y) a \in D(A)\\
  &  \Rightarrow & y(1)=\mu\phi(y)
  \\
& \Rightarrow & \mathcal{B}y=y\in X  \; (\mbox{recall that}\; 0<\alpha\mu<1)\\
&\Rightarrow&  y\in D(\mathcal{B}|_X)\cap D(A).
\end{array}
$$

\medskip

We conclude by Theorem \ref{thm1} that the system (\ref{E1}) is exponentially stable  for $\mu>0$   small enough.

\end{example}

\begin{example}
Let   $\Omega =\left( 0,+\infty\right)$ and let us consider the following system
\begin{equation}\label{E2}
\left\{
\begin{array}{ll}
  y_t(\cdot,t) =   -y_{x}(\cdot,t)+v(t)  (1+k(x)) y(\cdot,t),& \mbox{in} \;  (0,+\infty)^2 \\
  y(0,t)=0, & \mbox{in} \, (0,+\infty)\\
  y(\cdot,0)=  y_0\in L^1(0,+\infty), & \mbox{in} \;(0,+\infty)
\end{array}
\right.
\end{equation}
Here,  $X=L^{1}\left( \Omega \right) $ is the state space, the parameter $u(t)$ is the bilinear  control and the corresponding solution $z(t):=y(\cdot,t)\in X$ is the state. The function $k$ is such that $k\in L^1(0,+\infty)$  and $\|k \|_X <1.$\\
The unbounded operator $A=-\frac{\partial }{\partial x}$  with domain $$D\left(
A\right) =\left\{ y\in W^{1,1}\left( \Omega \right); \; y\left( 0\right)
=0\right\} $$  generates a group of isometries $S(t)$ on  $X$, which is defined  for all $y\in L^1(\Omega)$ by
$$
S(t) y(\xi)=\left\{
  \begin{array}{ll}
    y(\xi-t), & \mbox{if} \; \xi-t\ge 0 \\
\\
    0, & \mbox{else.}
  \end{array}
\right.
$$
Let us define the operators  $\mathcal{B}=y+k(x)y$ and  $By=k(x)y$.

\medskip

Note that if $k\not\in  L^\infty(0,+\infty),$ then  $\mathcal{B}$ is not a bounded operator on $X$.
\medskip

Let us show that $B$ is $A-$bounded. It comes from
$$
|y(x)|= \left|\int_0^x y'(s) ds \right|\le  \int_0^\infty |y'(s)| ds, \forall y\in D(A)
$$
that
$D(A)\subset D(B))$ and that for all $y\in D(A),$ we have
$$
\|ky\|_X=\int_0^\infty |k(x)y(x)| dx\le \|y\|_{D(A)}\|k\|_X.
$$
Hence $B\in \mathcal{L}(X_1,X).$

$\bullet$ Admissibility of $B$. Let $T>0$,
$$\begin{array}{lll}
    \int_0^T \|BS(t)y\|_X dt &=& \int_0^T \|k(x)S(t)y) \|_X dt
   \\
   &=& \int_0^T \int_0^\infty |k(x)y(x-t)| {\bf 1}_{(0\le t\le x)} dx dt
  \\
& = &\int_0^T \int_0^\infty |k(x+t)y(x)| dx dt\\
&=&\int_0^\infty |y(x)| \left( \int_0^T  |k(x+t)| dt \right) dx\\
&\le& \|k\|_X \|y\|_X.
  \end{array}
  $$
This implies the admissibility etimate $(m_1)$.

$\bullet $ Assumption $(h_6)$.
For $T>0$, we have
$$
\langle k(x) y,J(y) \rangle = \|y\|_X\int_0^\infty k(x)|y(x)| dx,\; y\in D(A).
$$
Then
$$
\begin{array}{lll}
  |\langle k(x) y,J(y) \rangle -\langle k(x) z,J(z) \rangle| &\le & \|y\|_X \int_0^\infty | k(x) (y(x)-z(x))| dx  \\
  &+&  \|y-z\|_X \int_0^\infty |k(x) y(x)| dx
  \\
 &=&\|y\|_X \|B (y-z)\|_X  + \|y-z\|_X \|By\|_X
 \end{array}
$$
which gives $(h_6)$.

$\bullet$ Observation. For $T>0$, we have

$$\begin{array}{lll}
    \langle \mathcal{B} S(t)y,J(S(t)y) \rangle &=& \|S(t)y\|_X^2 + \langle B S(t)y,J(S(t)y) \rangle
  \\
   & \ge& \|y\|_X^2  - \|y\|_X \|B S(t)y\|_X
  \end{array}
$$
then
$$
\begin{array}{lll}
  \int_0^T \langle \mathcal{B} S(t)y,J(S(t)y) \rangle dt & \ge&  \|y\|_X^2  - \|y\|_X \int_0^T \|B S(t)y\|_X dt
  \\
  &\ge& (1-\|k \|_X) \|y\|_X^2.
 \end{array}
$$
Hence $(m_2)$ holds for $\|k \|_X <1.$\\
From Corollary \ref{cor3}, we conclude that the control $v(t)=-\mu {\bf 1}_{\{t\ge 0: \; y(\cdot,t)\ne 0 \}}$ ensures the exponential stability of the system (\ref{E2}).
\end{example}

\begin{example}

 Consider the following system:
\begin{equation}  \label{heat1} \left\{%
\begin{array}{ll} y_{t}(\cdot,t)= y_{xx}(\cdot,t)+v(t) \big (y(\cdot,t)+y_x(\cdot,t) \big ), & \ (x,t)\in [0,1]\times (0,+\infty), \\
        y'(0,t)=y'(1,t)=0, & t\in  (0,+\infty),\\
  y(\cdot,0)=  y_0\in \mathcal{ C}_0([0,1]), & \ x\in [0,1]

\end{array}%
\right.
\end{equation}
where $ u(t)\in \mathbb{R}$ is the control and $y(t)=y(\cdot,t)$ is the state.

\medskip

The state space $X=\mathcal{ C}_0([0,1])$ is equipped with the supremum norm, the  operator   $A=\partial_{xx} $ with domain $\mathcal{ D}(A)=\{y\in \mathcal{C}^2([0,1]): \; y'(0)=y'(1)=0\}$  generates a contraction $C_0-$semigroup
$S(t)$ in $X:=\mathcal{C}_0([0,1])$ (see \cite{eng}, pp. 93-94). The control operator is $ \mathcal{B}=I+ \partial_x, $ then the $A-$boundedness of $\mathcal{B}$  follows from the following  inequalities
$$
|y'(x)|\le \int_0^1 |y''(s)| ds\le \|y\|_{D(A)},\; y\in D(A).
$$
In other words, $\mathcal{B} \in\mathcal{L}( X_1 , X).$

\medskip

We will show that the stabilization assumptions previously considered  are  not all required. In particular, here we only need some elements of the duality set.
For $f\in C_0([0,1])$ we have (\cite{eng}, p. 93): $$\Lambda(f) := \{\varphi= f(s_0) \delta_0: \;  s_0\in [0,1] \; \mbox{is s.t } \;   |f(s_0)|=\|f\|= \displaystyle \max_{s\in [0,1]}|f(s)|\} \subset J(f),$$   where $\delta_{s_0}$ is any point measure  supported by a point $s_0$ where $|f|$ reaches its maximum. For the expression of the full duality map  (see e.g. \cite{bar}, p. 5).

\medskip

Let $y\in D(A) $ and let $y^*=|y(s_0)| \delta_{s_0} \in J(y),$ i.e. $|y(s_0)| =\|y\|.$ Thus  for $s_0\in (0,1)$ we have  $y'(s_0)=0, $ so taking into account the Neumann boundary conditions, we deduce that $y'(s_0)=0$ for every $s_0\in [0,1]$ such that  $|y(s_0)| =\|y\|.$
Thus we have
$$
\langle y',J(y)\rangle=|y'(s_0)| |y(s_0)|=0,\; \forall y\in D(A).
$$
It follows that
$$
\langle \mathcal{B} y,J(y)\rangle =\|y\|^2,\;\forall y\in D(A).
$$
Hence $(h_5)$ and $(h_6)$ are verified for any  element of $\Lambda(f)$.\\
Now, from (\cite{paz}, p. 82) we deduce that $A-\mu\mathcal{B}$ with domain $D(A-\mu\mathcal{B})=D(A)$ is a generator on $X$ for $\mu>0$ small enough. Hence using again that $
\langle \mathcal{B} y,J(y)\rangle =\|y\|^2$ for all $y\in D(A)$, we derive directly  from (\ref{dy}) that the feedback control $v(t)=-\mu {\bf 1}_{\{t\ge 0: \; y(t)+y_x(t)\ne0\}}$ results in an exponentially stable closed-loop system for a small gain control $\mu>0.$
\end{example}

%

\end{document}